\documentclass[12pt,bezier]{article}
\usepackage{times}
\usepackage{booktabs}
\usepackage{pifont}
\usepackage{floatrow}
\floatsetup[table]{capposition=top}
\usepackage{caption}
\usepackage{mathrsfs}
\usepackage[fleqn]{amsmath}
\usepackage{amsfonts,amsthm,amssymb,mathrsfs,bbding}
\usepackage{txfonts}
\usepackage{graphics,multicol}
\usepackage{graphicx}
\usepackage{color}
\usepackage{enumerate}
\usepackage{caption}
\captionsetup{%
  figurename=Fig.,
  tablename=Tab.
}
\usepackage{cite}
\usepackage{latexsym,bm}
\usepackage{indentfirst}
\usepackage{mathtools}
\pagestyle{myheadings} \markright{} \textwidth 150mm \textheight 235mm \oddsidemargin=1cm
\evensidemargin=\oddsidemargin\topmargin=-1.5cm

\newtheorem{thm}{Theorem}[section]

\newtheorem{lem}{Lemma}[section]
\newtheorem{cor}{Corollary}[section]

\newtheorem{remark}{Remark}

\theoremstyle{definition}

\addtocounter{section}{0}

\begin{document}

\title{The full chracterazation of the graphs with a L-eigenvalue of multiplicity $n-3$
 \footnote{Supported by the National Natural Science Foundation of China (Grant Nos. 11671344, 11531011, 11626205).}}
\author{{\small Daijun Yin, \ \ Qiongxiang Huang\footnote{
Corresponding author.
Email: yindjmath@163.com, huangqx@xju.edu.cn}}\\[2mm]\scriptsize
College of Mathematics and Systems Science,
\scriptsize Xinjiang University, Urumqi, Xinjiang 830046, P.R. China}
\date{}
\maketitle {\flushleft\large\bf Abstract } Let $\mathcal{G}(n,k)$ be the set of connected graphs of order $n$ with one of the Laplacian eigenvalue having multiplicity $k$. It is well known that $\mathcal{G}(n,n-1)=\{K_n\}$. The graphs of $\mathcal{G}(n,n-2)$ are determined by Das, and
the graphs of $\mathcal{G}(n,n-3)$ with four distinct Laplacian eigenvalues are determined by Mohammadian et al.
In this paper, we determine the graphs of $\mathcal{G}(n,n-3)$ with three distinct Laplacian eigenvalues, and then the full characterization of the graphs in $\mathcal{G}(n,n-3)$ is completed.

\vspace{0.1cm}
\begin{flushleft}
\textbf{Keywords:} Laplacian eigenvalue; multiplicity; Laplacian spectrum determined.
\end{flushleft}

\section{Introduction}
Let $G$ be a undirected simple graph on $n$ vertices with vertex set $V=\{v_1,v_2,\ldots,v_n\}$. The \emph{adjacency matrix} of $G$ is an $n\times n$ matrix $A(G)$ whose $(i,j)$-entry is $1$ if $v_i$ is adjacent to $v_j$ and is $0$ otherwise. The \emph{degree} of $v_i$, denoted by $d_i$, is the number of edges that incident to $v_i$. The matrix $L(G)=D(G)-A(G)$ is called the \emph{Laplacian matrix} of $G$, where $D(G)$ is the $n\times n$ diagonal matrix
whose $(i,i)$-entry is $d_i$. The eigenvalues of $L(G)$ are called \emph{Laplacian eigenvalues} of $G$( short for L-eigenvalues ).
Since $L(G)$ is a positive semidefinite matrix, the L-eigenvalues of $G$ are non-negative and the smallest eigenvalue equals zero.
All the L-eigenvalues together with their multiplicities are called the \emph{Laplacian spectrum} of $G$ ( short for L-spectrum ), denoted by $\emph{Spec}_L(G)$. A graph $G$ is called DLS if $H\cong G$ for any graph $H$ with $\emph{Spec}_L(H)=\emph{Spec}_L(G)$.
Throughout this paper, we denote the multiplicity of L-eigenvalue $\mu$ by $m(\mu)$, and the \emph{diameter} of $G$ by $d(G)$.

Since the connected graph with a few of distinct eigenvalues possess nice combinatorial properties, it arouses a lot of interests for several matrices,
such as the adjacency matrix \cite{Dam3,Dam1,Dam2,Huang}, the Laplacian matrix \cite{Dam,Mohammadian,12},
the signless Laplacian matrix \cite{F} and normalized Laplacian matrix \cite{Dam4}.
We denote by $\mathcal{G}(n,k)$ the set of connected graphs of order $n$ with one of the L-eigenvalue having multiplicity $k$.
It is well known that $\mathcal{G}(n,n-1)=\{K_n\}$. Das \cite{Das} proved that $\mathcal{G}(n,n-2)=\{K_{\frac{n}{2},\frac{n}{2}},K_{1,n-1},K_n-e\}$,
and Mohammadian and Tayfeh-Rezaie \cite{Mohammadian} gave a partial characterization for the graphs of $\mathcal{G}(n,n-3)$.
Motivated by their work, we will complete the characterization of the remaining graphs in $\mathcal{G}(n,n-3)$. By the way, we show that all these graphs of $\mathcal{G}(n,n-3)$ are DLS.

\section{Preliminaries}
The following result is given by Godsil.
\begin{thm}[\cite{God}]\label{2-thm-1}Let $Q=\begin{pmatrix}I_m\mid O\end{pmatrix}^T$ be a $n\times m$ matrix, and let $A$ be a $n\times n$ real symmetric matrix with eigenvalues $\lambda_1\ge\lambda_2\ge\cdots\ge\lambda_n$. If the eigenvalues of $B=Q^TAQ$ are $\mu_1\geq\mu_2\geq\cdots\geq\mu_m$,
then $\lambda_i\ge\mu_i\ge\lambda_{n-m+i}$ ($i=1,\cdots,m$).
Furthermore, if $\mu_i=\lambda_i$ for some $i\in[1,m]$, then $M$ has a $\mu_i$-eigenvector $z$
such that $Qz$ is a $\lambda_i$-eigenvector of $A$.
\end{thm}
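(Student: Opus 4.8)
The plan is to reduce everything to the Courant--Fischer min--max characterization of the eigenvalues of a real symmetric matrix. First I would record the two structural facts about $Q$. Since $Q=\begin{pmatrix}I_m\mid O\end{pmatrix}^T$, the matrix $B=Q^TAQ$ is precisely the $m\times m$ leading principal submatrix of $A$; and $Q^TQ=I_m$, so the linear map $x\mapsto Qx$ is an isometric embedding of $\mathbb{R}^m$ into $\mathbb{R}^n$ that preserves Rayleigh quotients, namely $(Qx)^TA(Qx)=x^TBx$ and $(Qx)^T(Qx)=x^Tx$ for every $x\in\mathbb{R}^m$. In particular $Q$ is injective, so for any subspace $U\subseteq\mathbb{R}^m$ the image $QU$ has the same dimension as $U$.

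For the bound $\mu_i\le\lambda_i$ I would use $\mu_i=\max\{\min_{0\ne x\in U}\tfrac{x^TBx}{x^Tx}:\dim U=i\}$, pick an optimal $i$-dimensional $U$, and note that $QU$ is an $i$-dimensional subspace of $\mathbb{R}^n$ on which the Rayleigh quotient of $A$ has the same minimum, which is $\le\lambda_i$ by the min--max formula for $A$. For $\mu_i\ge\lambda_{n-m+i}$ I would run the dual characterization, writing $\mu_i=\min\{\max_{0\ne x\in U}\tfrac{x^TBx}{x^Tx}:\dim U=m-i+1\}$ and using that $n-(n-m+i)+1=m-i+1$, so the embedded subspace has exactly the dimension needed to compare with $\lambda_{n-m+i}$. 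This establishes the interlacing.

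For the equality statement, suppose $\mu_i=\lambda_i$. Let $z_1,\dots,z_i$ be orthonormal $B$-eigenvectors for $\mu_1\ge\cdots\ge\mu_i$ and set $U=\langle z_1,\dots,z_i\rangle$; then every nonzero $x\in U$ satisfies $x^TBx\ge\mu_ix^Tx$, so $QU$ is an $i$-dimensional subspace of $\mathbb{R}^n$ on which the Rayleigh quotient of $A$ is everywhere $\ge\lambda_i$. Let $w_1,\dots,w_n$ be orthonormal $A$-eigenvectors for $\lambda_1\ge\cdots\ge\lambda_n$ and set $W=\langle w_i,w_{i+1},\dots,w_n\rangle$, of dimension $n-i+1$. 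Since $\dim(QU)+\dim W=i+(n-i+1)=n+1>n$, there is a nonzero $y\in QU\cap W$. On $W$ the Rayleigh quotient of $A$ is $\le\lambda_i$, while on $QU$ it is $\ge\lambda_i$, hence $y^TAy=\lambda_i\,y^Ty$; writing $y=\sum_{j\ge i}a_jw_j$ this forces $\sum_{j\ge i}(\lambda_i-\lambda_j)a_j^2=0$, so $a_j=0$ whenever $\lambda_j<\lambda_i$, and therefore $Ay=\lambda_iy$. Finally, since $y\in QU$ and $Q$ is injective, $y=Qz$ for a unique $z\in U$ with $z\ne0$, and then $Bz=Q^TAQz=Q^TAy=\lambda_iQ^Ty=\lambda_iQ^TQz=\lambda_iz=\mu_iz$. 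Thus $z$ is a $\mu_i$-eigenvector of $B$ with $Qz=y$ a $\lambda_i$-eigenvector of $A$, as required.

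The interlacing inequalities are routine applications of min--max; the part that needs care is the equality case. The key ideas there are the dimension count $\dim(QU)+\dim W=n+1$, which guarantees a common vector $y$, the observation that a vector attaining the extremal Rayleigh quotient on $W$ must in fact be a genuine $\lambda_i$-eigenvector, and the short identity $Bz=\mu_iz$ that transports the eigenvector back through $Q$. I expect no serious obstacle beyond being careful with the index bookkeeping in the dual min--max step.
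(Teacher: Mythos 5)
The paper states this theorem as a quoted result from Godsil--Royle and gives no proof of its own, so there is nothing internal to compare against; your argument is correct and is exactly the standard Courant--Fischer proof of Cauchy interlacing, with the equality case handled by the usual dimension count $\dim(QU)+\dim\langle w_i,\dots,w_n\rangle=n+1$ and the observation that a vector attaining the extremal Rayleigh quotient on $\langle w_i,\dots,w_n\rangle$ must be a genuine $\lambda_i$-eigenvector. All steps check out, including the final transport $Bz=Q^TAQz=\lambda_i z$ using $Q^TQ=I_m$, so the proposal is a complete and correct proof of the cited statement.
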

There are many pretty properties about Laplacian eigenvalues.
\begin{lem}[\cite{Cvet}]\label{2-lem-1}
Let $G$ be a graph on $n$ vertices with Laplacian eigenvalues $\mu_1\ge\mu_2\ge\cdots\ge\mu_{n-1}\ge\mu_n=0$. Then we have the following results.\\
(i) Denote by $m(0)$ the multiplicity of $0$ as a Laplacian eigenvalue and $w(G)$ the number of connected components of $G$. Then $w(G)=m(0)$.\\
(ii) $G$ has exactly two distinct Laplacian eigenvalues if and only if $G$ is a union of some complete graphs of the same order and some isolate vertices.\\
(iii) The Laplacian eigenvalues of $\bar{G}$ are given by $\mu_i(\bar{G})=n-\mu_{n-i}$ for $i=1,2,\ldots,n-1$ and $\mu_n(\bar{G})=0$.\\
(iv) Let $H$ be a graph on $m$ vertices with Laplacian eigenvalues $\mu_1'\ge\mu_2'\ge\cdots\ge\mu_m'=0$, then the Laplacian spectrum of $G\nabla H$ is \[\{n+m,m+\mu_1,m+\mu_2,\ldots,m+\mu_{n-1},n+\mu_1',n+\mu_2',\ldots,n+\mu_{m-1}',0\}.\]
\end{lem}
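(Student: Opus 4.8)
The four statements are essentially independent, and I would establish (i)--(iv) in that order, feeding (iii) into the proof of (iv). For (i) the key tool is the identity $x^{\top}L(G)x=\sum_{v_iv_j\in E}(x_i-x_j)^2$, which follows from the decomposition $L(G)=\sum_{v_iv_j\in E}(e_i-e_j)(e_i-e_j)^{\top}$. This simultaneously shows that $L(G)$ is positive semidefinite and that a vector $x$ lies in $\ker L(G)$ exactly when $x$ is constant on each connected component of $G$; hence $\dim\ker L(G)$ equals the number of components, i.e. $m(0)=w(G)$.

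For (ii) the forward direction is a direct computation: $L(K_r)=rI_r-J_r$ has eigenvalues $r$ (with multiplicity $r-1$) and $0$, so a disjoint union of copies of $K_r$ together with isolated vertices has only the Laplacian eigenvalues $r$ and $0$. Conversely, suppose $L=L(G)$ has exactly the two eigenvalues $0$ and $\mu>0$, so that $L(L-\mu I)=O$, that is $L^2=\mu L$. Comparing the $(i,i)$-entries of the two sides gives $d_i^2+d_i=\mu d_i$, so every non-isolated vertex has degree $\mu-1$; comparing the $(i,j)$-entries for $i\ne j$ shows that two adjacent vertices have exactly $\mu-2$ common neighbours while two non-adjacent non-isolated vertices have none. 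A short argument then shows that inside any non-trivial component a neighbour $u$ of a vertex $v$ must be adjacent to all of $N(v)\setminus\{u\}$, so that $N[u]=N[v]$; by connectivity this common closed neighbourhood is the whole component, which is therefore a clique $K_{\mu}$, and the remaining vertices are isolated.

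For (iii) I would begin from $L(G)+L(\bar G)=L(K_n)=nI_n-J_n$. Since every row sum of a Laplacian vanishes, $L(G)J_n=J_nL(G)=O$ and likewise for $\bar G$; hence $L(G)$ and $L(\bar G)$ commute and both annihilate the all-ones vector $\mathbf{1}$. On the invariant subspace $\mathbf{1}^{\perp}$ the matrix $J_n$ acts as $O$, so $L(\bar G)=nI_n-L(G)$ there, and a common eigenbasis of $\mathbf{1}^{\perp}$ converts each eigenvalue $\mu$ of $L(G)$ into $n-\mu$ for $L(\bar G)$. Writing $n-\mu_1\le\cdots\le n-\mu_{n-1}$ in decreasing order yields $\mu_i(\bar G)=n-\mu_{n-i}(G)$ for $1\le i\le n-1$, with the remaining $0$ supplied by $\mathbf{1}$. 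For (iv) the cleanest route is to apply (iii) twice. Since $\overline{G\nabla H}=\bar G\cup\bar H$ and $L(\bar G\cup\bar H)$ is block diagonal, by (iii) the Laplacian spectrum of $\overline{G\nabla H}$ is $\{\,n-\mu_{n-1},\ldots,n-\mu_1,0\,\}\cup\{\,m-\mu_{m-1}',\ldots,m-\mu_1',0\,\}$; all of these values are non-negative and there are (at least) two zeros, so in the sorted list the last two entries are $0$. Applying (iii) once more with $N=n+m$ turns $\mu_1(G\nabla H)$ into $N-0=n+m$ and turns the remaining $N-2$ non-minimal eigenvalues $n-\mu_j$ and $m-\mu_k'$ into $m+\mu_j$ and $n+\mu_k'$, giving the multiset $\{\,n+m,\ m+\mu_1,\ldots,m+\mu_{n-1},\ n+\mu_1',\ldots,n+\mu_{m-1}',\ 0\,\}$. (Alternatively one can exhibit the eigenvectors directly: $\mathbf{1}_{n+m}$ for $0$; an $L(G)$-eigenvector on $\mathbf{1}_n^{\perp}$ padded by zeros on $V(H)$ for $m+\mu_j$, and symmetrically for $H$; and the vector equal to $m$ on $V(G)$ and $-n$ on $V(H)$ for $n+m$ --- these $n+m$ vectors are mutually orthogonal and hence span.)

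The only step carrying any friction is the structural part of the converse of (ii): extracting from the entrywise consequences of $L^2=\mu L$ the conclusion that each non-trivial component is a clique. Once the common degree $\mu-1$ and the common-neighbour counts $\mu-2$ (for adjacent pairs) and $0$ (for non-adjacent pairs) are in hand, the clique structure follows by the short neighbourhood argument sketched above, so even this is routine. Everything else is bookkeeping with the two identities $x^{\top}L(G)x=\sum(x_i-x_j)^2$ and $L(G)+L(\bar G)=nI_n-J_n$.
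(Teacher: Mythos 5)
The paper states this lemma as a known result cited from the Cvetkovi\'c--Rowlinson--Simi\'c book and gives no proof, so there is nothing internal to compare against. Your argument is correct and is the standard textbook derivation: the quadratic form $x^{\top}L(G)x=\sum_{v_iv_j\in E}(x_i-x_j)^2$ for (i), the minimal-polynomial identity $L^2=\mu L$ with the entrywise degree and common-neighbour counts for (ii), the identity $L(G)+L(\bar G)=nI_n-J_n$ restricted to $\mathbf{1}^{\perp}$ for (iii), and (iii) applied to $\overline{G\nabla H}=\bar G\cup\bar H$ for (iv) --- all steps check out, including the clique argument $N[u]=N[v]$ and the accounting of the two zero eigenvalues of $\bar G\cup\bar H$.
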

It is well-known that
\begin{lem}[\cite{Cvet}]\label{2-lem-2} Let $G$ be a connected graph on $n\geq3$ vertices with $s$ distinct Laplacian eigenvalues.
Then $d(G)\leq s-1$.
\end{lem}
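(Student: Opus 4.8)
The plan is to combine information about the minimal polynomial of $L(G)$ with the combinatorial meaning of the entries of its powers. Let $\mu_1>\mu_2>\cdots>\mu_s$ be the distinct Laplacian eigenvalues of $G$. Since $L(G)$ is real symmetric, it is diagonalizable, so its minimal polynomial is $\prod_{i=1}^{s}(x-\mu_i)$, of degree exactly $s$; hence $L(G)^s$, and therefore every power $L(G)^m$ with $m\ge 0$, lies in the linear span of $I,L(G),L(G)^2,\ldots,L(G)^{s-1}$.

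The heart of the proof is the following fact: if $v_i$ and $v_j$ are distinct vertices with $d(v_i,v_j)=d$, then the $(i,j)$-entry of $L(G)^k$ equals $0$ whenever $k<d$ and is nonzero when $k=d$. To prove it, expand $L(G)^k=(D(G)-A(G))^k$ into a sum of $2^k$ words in the matrices $D(G)$ and $-A(G)$. Because $D(G)$ is diagonal, a word can have a nonzero $(i,j)$-entry only if the positions occupied by its $-A(G)$-factors encode a walk from $v_i$ to $v_j$, and such a walk has length at least $d$; so no word contributes when $k<d$. When $k=d$, the only surviving word is $(-A(G))^d$, giving $(L(G)^d)_{ij}=(-1)^d\,(A(G)^d)_{ij}$, which is $(-1)^d$ times the number of shortest $v_i$–$v_j$ paths and hence nonzero.

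Putting the pieces together, suppose for contradiction that $d(G)\ge s$ and choose $v_i,v_j$ with $d(v_i,v_j)=d(G)$. By the fact above, $(L(G)^{d(G)})_{ij}\ne 0$. On the other hand, $L(G)^{d(G)}$ is a linear combination of $I,L(G),\ldots,L(G)^{s-1}$, and each of these matrices has $(i,j)$-entry $0$ since the exponents involved are all at most $s-1<d(G)$; thus $(L(G)^{d(G)})_{ij}=0$, a contradiction. Hence $d(G)\le s-1$. The only genuinely delicate step is the "$k=d$" half of the fact: one must be certain that no cancellation destroys the nonzero $(i,j)$-entry, which is exactly why it matters that only the single word $(-A(G))^d$ survives. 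The remaining ingredients, namely diagonalizability forcing a degree-$s$ minimal polynomial and the bookkeeping of the supports of powers of $L(G)$, are routine, and the hypothesis $n\ge 3$ enters only to ensure $d(G)\ge 1$ so that the inequality is not vacuous.
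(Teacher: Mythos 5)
Your proof is correct, and it is the standard argument for this classical fact (the paper itself states Lemma \ref{2-lem-2} as a known result from the cited reference and gives no proof). Both halves of your key claim are handled properly: the vanishing of $(L(G)^k)_{ij}$ for $k<d(v_i,v_j)$ via the walk interpretation of the words in $(D(G)-A(G))^k$, and the non-vanishing at $k=d$ because only the word $(-A(G))^d$ survives, so no cancellation can occur; combined with the degree-$s$ minimal polynomial of the diagonalizable matrix $L(G)$, this yields the bound exactly as in the textbook treatment.
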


A graph $G$ is said to be a \emph{cograph} if it contains no induced $P_4$. There's a pretty result about cographs.
\begin{lem}[\cite{Corneil}]\label{2-lem-3}
Given a graph $G$, the following statements are equivalent:\\
1) $G$ is a cograph.\\
2) The complement of any connected subgraph of $G$ with at least two vertices is
disconnected.\\
3) Every connected induced subgraph of $G$ has diameter less than or equal to $2$.
\end{lem}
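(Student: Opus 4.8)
The plan is to reduce everything to a single statement about arbitrary graphs that does not mention cographs, and from which the three equivalences follow in a few lines.

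\smallskip
\noindent\textbf{Key Lemma.} \emph{If $H$ is a graph on $n\ge 2$ vertices such that both $H$ and $\overline{H}$ are connected, then $H$ contains an induced $P_4$.}
\smallskip

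\noindent No such $H$ exists when $n\in\{2,3\}$, so the statement has content only for $n\ge 4$; I would prove it by strong induction on $n$. For $n=4$ one simply inspects the six connected graphs on four vertices and observes that $P_4$ is the only one whose complement is connected, so $H\cong P_4$. For $n\ge 5$ there are two cases. If $H$ has a cut vertex $v$, then, since $\overline{H}$ is connected on at least two vertices, $v$ is not isolated in $\overline{H}$ and so has a non-neighbour $w$ in $H$; let $C_1,\dots,C_k$ (with $k\ge 2$) be the components of $H-v$, say $w\in C_1$, and let $x$ be a neighbour of $v$ in $C_2$ (which exists because $H$ is connected and $C_2$ is a component of $H-v$). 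Inside the connected graph $C_1$ choose a shortest path $w=u_0,u_1,\dots,u_t$ from $w$ to $N(v)\cap C_1$, so that $u_t\in N(v)$ while $u_0,\dots,u_{t-1}\notin N(v)$, and $t\ge 1$ because $w\notin N(v)$. Then among $\{u_{t-1},u_t,v,x\}$ the only edges are $u_{t-1}u_t$, $u_tv$ and $vx$ — the pair $u_{t-1}v$ is a non-edge by the choice of the path, and $u_{t-1}x$ and $u_tx$ are non-edges because $u_{t-1},u_t\in C_1$ while $x\in C_2$ — so this quadruple induces a $P_4$. If instead $H$ has no cut vertex, then $\overline{H}$, being connected on $n\ge 3$ vertices, has a non-cut vertex $v$ (for instance a leaf of a spanning tree of $\overline{H}$); now $H-v$ is connected because $v$ is not a cut vertex of $H$, and $\overline{H-v}=\overline{H}-v$ is connected because $v$ is not a cut vertex of $\overline{H}$, and both have $n-1\ge 4$ vertices, so by the induction hypothesis $H-v$ contains an induced $P_4$, which is also an induced $P_4$ of $H$.

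Granting the Key Lemma I would close the cycle $1)\Rightarrow 2)\Rightarrow 3)\Rightarrow 1)$; throughout I read ``subgraph'' in (2) as ``induced subgraph'', consistently with (3) and as needed for the statement to hold. For $1)\Rightarrow 2)$: let $H$ be a connected induced subgraph of the cograph $G$ with $|V(H)|\ge 2$; if $\overline{H}$ were connected too, the Key Lemma would produce an induced $P_4$ in $H$, hence in $G$, which is impossible, so $\overline{H}$ is disconnected. For $2)\Rightarrow 3)$: if some connected induced subgraph $H$ had diameter at least $3$, a geodesic $w_0,w_1,w_2,w_3$ of $H$ would satisfy $w_0\not\sim w_2$, $w_1\not\sim w_3$ and $w_0\not\sim w_3$, so $G[\{w_0,w_1,w_2,w_3\}]\cong P_4$ would be a connected induced subgraph of $G$ with connected complement $\overline{P_4}\cong P_4$, contradicting (2). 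For $3)\Rightarrow 1)$: an induced $P_4$ in $G$ is a connected induced subgraph of diameter $3$, which (3) forbids, so $G$ is a cograph.

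The $n=4$ verification and the geodesic computation are routine. The substantive part is the Key Lemma, and in particular the cut-vertex construction: the whole argument must stay within elementary graph theory so that it does not implicitly invoke the very cograph structure theory being proved, and the four vertices $u_{t-1},u_t,v,x$ must be chosen so that precisely the three path edges survive. I expect that construction, together with grounding the induction correctly at $n=4$, to be the main thing to get right.
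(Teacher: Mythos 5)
Your proposal is correct, but note that the paper does not prove this lemma at all: it is quoted verbatim from Corneil, Lerchs and Burlingham's paper on complement reducible graphs, so there is no in-paper argument to compare against. What you have supplied is a self-contained elementary proof, and it checks out. The Key Lemma (a graph on at least two vertices cannot have both itself and its complement connected unless it contains an induced $P_4$) is exactly the right engine; your cut-vertex construction is sound --- the quadruple $\{u_{t-1},u_t,v,x\}$ does induce a $P_4$ because $u_{t-1},u_t$ and $x$ lie in distinct components of $H-v$, so the only possible cross edges go through $v$, and the minimality of the path kills $u_{t-1}v$ --- and the $2$-connected case correctly descends to $n-1\ge 4$ vertices by deleting a non-cut vertex of $\overline{H}$, which is simultaneously a non-cut vertex of the ($2$-connected) graph $H$. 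The closing cycle $1)\Rightarrow 2)\Rightarrow 3)\Rightarrow 1)$ is routine and correctly executed, using that $\overline{P_4}\cong P_4$. Your decision to read ``subgraph'' in 2) as ``induced subgraph'' is not merely a stylistic choice but necessary: $K_4$ is a cograph containing a spanning (non-induced) $P_4$ whose complement is connected, so the literal reading of 2) is false; it is good that you flagged this, since the paper's statement inherits the same imprecision from its source. The only place where your write-up leans on an unproved but standard fact is the existence of a non-cut vertex in any connected graph on at least two vertices, which your parenthetical spanning-tree-leaf remark adequately covers.
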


\section{Characterization of $\mathcal{G}(n,n-3)$}

Recall that $\mathcal{G}(n,k)$ is the set of connected graphs  with $m(\mu)=k$ for some L-eigenvalue $\mu$.
If $k=n-1$, then $G$ has exactly two distinct L-eigenvalues, and so $\mathcal{G}(n,n-1)=\{K_n\}$ by Lemma \ref{2-lem-1} (ii).
If $k=n-2$, then $G$ has exactly three distinct L-eigenvalues, and
Das \cite{Das} proved that $\mathcal{G}(n,n-2)=\{K_{\frac{n}{2},\frac{n}{2}},K_{1,n-1},K_n-e\}$.
For a graph $G\in\mathcal{G}(n,n-3)$, we see that $G$ has three or four distinct L-eigenvalues. In terms of the number and multiplicity of Laplacian eigenvalues, we can divide $\mathcal{G}(n,n-3)$ into five classes:
$$\left\{
   \begin{array}{ll}
     \mathcal{G}_1(n,n-3)=\{G\in\mathcal{G}(n,n-3)|\emph{Spec}_L(G)=[(\alpha)^{n-3},(\beta)^2,0]\} \\
     \mathcal{G}_2(n,n-3)=\{G\in\mathcal{G}(n,n-3)|\emph{Spec}_L(G)=[(\alpha)^2,(\beta)^{n-3},0]\} \\
     \mathcal{G}_3(n,n-3)=\{G\in\mathcal{G}(n,n-3)|\emph{Spec}_L(G)=[(\alpha)^{n-3},\beta,\gamma,0]\} \\
     \mathcal{G}_4(n,n-3)=\{G\in\mathcal{G}(n,n-3)|\emph{Spec}_L(G)=[\alpha,(\beta)^{n-3},\gamma,0]\} \\
     \mathcal{G}_5(n,n-3)=\{G\in\mathcal{G}(n,n-3)|\emph{Spec}_L(G)=[\alpha,\beta,(\gamma)^{n-3},0]\}
   \end{array}
 \right.
$$
Mohammadian and Tayfeh-Rezaie \cite{Mohammadian} determine the classes of graphs in $\mathcal{G}_3(n,n-3)$, $\mathcal{G}_4(n,n-3)$ and $\mathcal{G}_5(n,n-3)$, all of which we collect in the following theorem.
\begin{thm}[\cite{Mohammadian}, Theorem 8,9,10]\label{3-thm-1} For an integer $n\geq5$, we have\\
(i) $\mathcal{G}_3(n,n-3)=\{K_{n-3}\nabla\overline{K}_{1,2}\}$ and the Laplacian spectra of it is
\begin{equation}\label{eq-1}
\emph{Spec}_{L}(K_{n-3}\nabla\overline{K}_{1,2})=\{[n]^{n-3},[n-1]^1,[n-3]^1,[0]^1\}.
\end{equation}
(ii) $\mathcal{G}_4(n,n-3)=\{K_1\nabla2K_{\frac{n-1}{2}},\overline{K}_{\frac{n}{3}}\nabla2K_{\frac{n}{3}},K_{n-1}+e,G_{n,r}\}$ where $G_{n,r}$ is obtained from two copies of $K_r\nabla (n-r)K_1$ by joining the vertices of two independent sets $(n-r)K_1$ (shown in Fig. \ref{fig-1}). Furthermore, the Laplacian spectra of them are given by
\begin{equation}\label{eq-2}
\left\{\begin{array}{l}
\emph{Spec}_{L}(K_1\nabla2K_{\frac{n-1}{2}})=\{[n]^1,[\frac{n+1}{2}]^{n-3},[1]^1,[0]^1\}\\
\emph{Spec}_{L}(\overline{K}_{\frac{n}{3}}\nabla2K_{\frac{n}{3}})=\{[n]^1,[\frac{2n}{3}]^{n-3},[\frac{n}{3}]^1,[0]^1\}\\
\emph{Spec}_{L}(K_{n-1}+e)=\{[n]^1,[n-1]^{n-3},[1]^1,[0]^1\}\\
\emph{Spec}_{L}(G_{n,r})=\{[\frac{1}{2}(3n-2r\pm\sqrt{n^2+4nr-4r^2})]^1,[n]^{2n-3},[0]^1\}\\
\end{array}\right..
\end{equation}
(iii) $\mathcal{G}_5(n,n-3)=\{K_{2,n-2},K_{\frac{n}{2},\frac{n}{2}}+e,K_{1,n-1}+e\}$ and the Laplacian spectra of them are given by
\begin{equation}\label{eq-3}
\left\{\begin{array}{l}
\emph{Spec}_{L}(K_{2,n-2})=\{[n]^1,[n-2]^1,[2]^{n-3},[0]^1\}\\
\emph{Spec}_{L}(K_{\frac{n}{2},\frac{n}{2}}+e)=\{[n]^1,[\frac{n}{2}+2]^1,[\frac{n}{2}]^{n-3},[0]^1\}\\
\emph{Spec}_{L}(K_{1,n-1}+e)=\{[n]^1,[3]^1,[1]^{n-3},[0]^1\}\\
\end{array}\right..
\end{equation}
\end{thm}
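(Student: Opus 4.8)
The plan is to split according to whether the complement $\overline{G}$ is connected, using repeatedly that for a connected graph $G$ the number $n$ is a Laplacian eigenvalue of $G$ exactly when $\overline{G}$ is disconnected, and then its multiplicity equals $w(\overline{G})-1$ (Lemma~\ref{2-lem-1}(i),(iii)). Fix $G\in\mathcal{G}(n,n-3)$ connected with exactly four distinct Laplacian eigenvalues $\theta_1>\theta_2>\theta_3>\theta_4=0$. Because $m(\mu)=n-3$ leaves only three further eigenvalues counted with multiplicity while there are four distinct values in all, those three are simple and one of them is $0$; hence the eigenvalue $\mu$ of multiplicity $n-3$ is $\theta_1$, $\theta_2$ or $\theta_3$, and these possibilities are exactly $\mathcal{G}_3$, $\mathcal{G}_4$, $\mathcal{G}_5$. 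By Lemma~\ref{2-lem-1}(iii), complementation is an involution on the connected graphs in $\mathcal{G}(n,n-3)$ having connected complement, carrying the spectral shape of $\mathcal{G}_3$ to that of $\mathcal{G}_5$ and preserving that of $\mathcal{G}_4$. So it suffices to treat ``$\overline{G}$ disconnected'' for each of $\mathcal{G}_3,\mathcal{G}_4,\mathcal{G}_5$, and ``$\overline{G}$ connected'' for $\mathcal{G}_3$ and for $\mathcal{G}_4$ (the $\mathcal{G}_5$ part then following by complementation). The converse half of the theorem, that the listed graphs have the stated spectra, is routine: for each listed graph that is a join it follows from Lemma~\ref{2-lem-1}(iv) and the spectra of $K_m$, $\overline{K}_m$, $P_3$, $C_4$; and for $G_{n,r}$ the equitable partition into its four natural cells gives the eigenvalue $n$ with the claimed multiplicity, the two remaining eigenvalues coming from the associated $4\times 4$ quotient matrix.

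\emph{Disconnected complement.} Here $\theta_1=n$. For $\mathcal{G}_3$, Lemma~\ref{2-lem-1}(iii) forces $0$ to be a Laplacian eigenvalue of $\overline{G}$ of multiplicity at least $n-2$, so $\overline{G}$ has $n-2$ components; as $|V(\overline{G})|=n$, either $\overline{G}=H\cup(n-3)K_1$ with $|V(H)|=3$ or $\overline{G}=2K_2\cup(n-4)K_1$, so $G$ is a join of a complete graph with a graph on at most four vertices. Applying Lemma~\ref{2-lem-1}(iv) to these finitely many candidates, only $H=P_3$ yields four distinct eigenvalues with the multiplicity pattern of $\mathcal{G}_3$, giving $G\cong K_{n-3}\nabla\overline{K}_{1,2}$; the remaining candidates land in $\mathcal{G}(n,n-1)$, $\mathcal{G}(n,n-2)$, or have the spectral shape of $\mathcal{G}_1$. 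For $\mathcal{G}_4$, and for the disconnected-complement part of $\mathcal{G}_5$, the complement spectrum shows $\overline{G}$ has exactly two components $H_1\cup H_2$, so $G=\overline{H_1}\,\nabla\,\overline{H_2}$ with $|V(H_i)|=n_i$; substituting into Lemma~\ref{2-lem-1}(iv), the $n-2$ shifted eigenvalues $\{\,n_2+\mu_j(\overline{H_1})\,\}\cup\{\,n_1+\mu_j(\overline{H_2})\,\}$ must consist of one value repeated $n-3$ times together with one further value, which forces each $\overline{H_i}$ to have at most three distinct Laplacian eigenvalues. By Lemma~\ref{2-lem-1}(ii) and Das's description of $\mathcal{G}(m,m-2)$~\cite{Das}, each $\overline{H_i}$ is then a disjoint union of equal complete graphs and isolated vertices, or one of $K_{m/2,m/2}$, $K_{1,m-1}$, $K_m-e$; a finite check of how two such spectra can combine leaves precisely $K_1\nabla 2K_{(n-1)/2}$, $\overline{K}_{n/3}\nabla 2K_{n/3}$, $K_{n-1}+e$ for $\mathcal{G}_4$, and $K_{2,n-2}$, $K_{n/2,n/2}+e$, $K_{1,n-1}+e$ for $\mathcal{G}_5$.

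\emph{Connected complement --- the main obstacle.} Now $n$ is not a Laplacian eigenvalue, so $G$ is not a join and the preceding method fails; instead one uses the large eigenspace. Since $L(G)-\mu I$ has rank $3$, comparing rows $i$ and $j$ shows that $e_i-e_j$ lies in the $\mu$-eigenspace exactly when $v_i,v_j$ are twins (adjacent with equal closed neighbourhoods, or non-adjacent with equal open neighbourhoods) whose common degree is $\mu-1$ or $\mu$, respectively. Hence $G$ is a blow-up of the reduced graph $Q$ on its twin classes, where each class is a clique or an independent set, classes are pairwise completely joined or non-adjacent, and no two vertices of $Q$ are twins; the twin-class partition being equitable, $\mathrm{Spec}_L(G)$ is the union of one per-class value for each nontrivial class (necessarily in $\{\theta_1,\theta_2,\theta_3\}$, since $G$ is connected) with the spectrum of the symmetric quotient matrix carried by $Q$, and a short count shows that $Q$, weighted by the class sizes, again has at most four distinct eigenvalues one of which has multiplicity at least $|V(Q)|-3$. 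Combining this with $d(G)\le 3$ (Lemma~\ref{2-lem-2}) and with Godsil's interlacing (Theorem~\ref{2-thm-1}) applied to the one-vertex-deleted principal submatrices of $L(G)$ --- which limits how much of the large multiplicity $Q$ can absorb --- one bounds $|V(Q)|$ by a small absolute constant, so $Q$, and hence $G$, ranges over finitely many blow-up configurations depending only on a few size parameters. Matching the prescribed spectrum over these parameters, where the identity $(L-\theta_1 I)(L-\theta_2 I)(L-\theta_3 I)=-\tfrac{\theta_1\theta_2\theta_3}{n}J$ (valid whenever there are four distinct Laplacian eigenvalues) is a convenient constraint, rules out every configuration of shape $\mathcal{G}_3$, hence also of shape $\mathcal{G}_5$, and leaves exactly the four-cell graph $G_{n,r}$ in shape $\mathcal{G}_4$. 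I expect the step bounding $|V(Q)|$ --- equivalently, ruling out a large twin-free graph with four distinct Laplacian eigenvalues one of which has multiplicity three less than the order --- to be the hardest point. The cograph criterion of Lemma~\ref{2-lem-3} is a helpful organizing principle here: a connected cograph is an iterated join, all listed graphs except $G_{n,r}$ are cographs, and $G_{n,r}$, the unique non-cograph (it contains an induced $P_4$), is precisely the graph one meets in the connected-complement analysis.

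Assembling the three cases gives the lists in (i)--(iii). Since all the Laplacian spectra occurring in the analysis are pairwise distinct, each of these graphs is determined by its Laplacian spectrum, yielding the claimed DLS property.
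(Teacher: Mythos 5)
This theorem is quoted from \cite{Mohammadian}; the present paper gives no proof of it, so there is no in-paper argument to compare yours against, and I can only judge the proposal on its own terms and against the techniques the paper does develop. Your architecture (split on connectedness of $\bar{G}$; in the disconnected case use Lemma \ref{2-lem-1}(i),(iii),(iv) to write $G$ as a join and analyse the factors; use complementation to trade the $\mathcal{G}_3$ shape against the $\mathcal{G}_5$ shape when $\bar{G}$ is connected) is sensible, and the disconnected-complement half is essentially correct modulo one repair: after writing $G=\overline{H_1}\nabla\overline{H_2}$ you cannot simply invoke Das's classification of $\mathcal{G}(m,m-2)$ for the factor carrying the repeated eigenvalue, because that factor need not be connected; the disconnected possibilities have to be handled separately via Lemma \ref{2-lem-1}(i),(ii). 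That is routine bookkeeping.

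The genuine gap is the connected-complement case, which is where the entire difficulty of the theorem sits and which you yourself flag as ``the main obstacle.'' Your reduction from $\operatorname{rank}(L-\mu I)=3$ to ``$G$ is a blow-up of a bounded twin-free graph $Q$'' does not follow: rank $3$ means the $n$ rows of $L-\mu I$ span a $3$-dimensional space, not that they fall into few pairwise-dependent classes, and $e_i-e_j$ lying in the $\mu$-eigenspace is a sufficient condition tied to twins, not something a large eigenspace is guaranteed to supply ($C_5$ is twin-free yet has a Laplacian eigenvalue of multiplicity $n-3$, so the implication cannot be purely rank-theoretic). The assertion that $|V(Q)|$ is ``bounded by a small absolute constant'' is exactly the content one must prove --- that a twin-free graph with a Laplacian eigenvalue of multiplicity three less than its order has bounded order --- so as written the argument is circular at its key step. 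The working technique, both in \cite{Mohammadian} and in this paper's own Lemmas \ref{3-lem-1}, \ref{3-lem-2}, \ref{3-lem-3} and \ref{3-lem-4}, is interlacing on order-$5$ principal submatrices of $L(G)$: the multiplicity-$(n-3)$ eigenvalue appears with multiplicity at least $2$ in every such submatrix, one manufactures eigenvectors with a prescribed zero coordinate and zero coordinate sum, uses integrality of the eigenvalue, and derives contradictions that forbid the induced subgraphs $C_5$, $J_2$, $J_3$ --- whence $G$ is a cograph (so $\bar{G}$ is disconnected) except in the configurations that produce $G_{n,r}$. Without that step, or a genuine substitute for it, the proposal does not establish parts (i)--(iii).
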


\begin{figure}
  \centering
\unitlength 3.6mm 
\linethickness{0.4pt}
\ifx\plotpoint\undefined\newsavebox{\plotpoint}\fi 
\begin{picture}(16,9)(0,0)
\put(1,7){\line(1,0){5}}
\put(1,7){\line(5,-2){5}}
\put(1,7){\line(5,-6){5}}
\put(1,5){\line(5,2){5}}
\put(1,5){\line(1,0){4}}
\put(5,5){\line(1,0){1}}
\put(6,5){\line(-5,-4){5}}
\put(1,1){\line(1,0){5}}
\put(6,7){\line(-5,-6){5}}
\put(1,5){\line(5,-4){5}}
\put(6,7){\line(1,0){4}}
\multiput(10,7)(-.06666667,-.03333333){60}{\line(-1,0){.06666667}}
\put(6,5){\line(1,-1){4}}
\put(10,1){\line(-1,0){4}}
\put(6,1){\line(1,1){4}}
\multiput(10,5)(-.06666667,.03333333){60}{\line(-1,0){.06666667}}
\put(10,7){\line(-2,-3){4}}
\put(6,7){\line(2,-3){4}}
\put(6,5){\line(1,0){4}}
\put(10,7){\line(1,0){5}}
\put(10,7){\line(5,-2){5}}
\put(10,7){\line(5,-6){5}}
\put(10,5){\line(5,2){5}}
\put(10,5){\line(1,0){4}}
\put(14,5){\line(1,0){1}}
\put(15,5){\line(-5,-4){5}}
\put(10,1){\line(1,0){5}}
\put(15,7){\line(-5,-6){5}}
\put(10,5){\line(5,-4){5}}
\multiput(.93,4.93)(0,-.8){6}{{\rule{.4pt}{.4pt}}}
\multiput(5.93,4.93)(0,-.75){5}{{\rule{.4pt}{.4pt}}}
\multiput(9.93,4.93)(0,-.8){6}{{\rule{.4pt}{.4pt}}}
\multiput(14.93,4.93)(0,-.8){6}{{\rule{.4pt}{.4pt}}}
\put(1,4){\oval(2,8)[]}
\put(6,4){\oval(2,8)[]}
\put(10,4){\oval(2,8)[]}
\put(15,4){\oval(2,8)[]}
\put(1,9){\makebox(0,0)[cc]{\small $K_r$}}
\put(6.3,9){\makebox(0,0)[cc]{\small $(n-r)K_1$}}
\put(10.5,9){\makebox(0,0)[cc]{\small $(n-r)K_1$}}
\put(15,9){\makebox(0,0)[cc]{\small $K_r$}}
\put(1,7){\circle*{.5}}
\put(1,5){\circle*{.5}}
\put(1,1){\circle*{.5}}
\put(6,7){\circle*{.5}}
\put(6,5){\circle*{.5}}
\put(6,1){\circle*{.5}}
\put(10,7){\circle*{.5}}
\put(10,5){\circle*{.5}}
\put(10,1){\circle*{.5}}
\put(15,7){\circle*{.5}}
\put(15,5){\circle*{.5}}
\put(15,1){\circle*{.5}}
\end{picture}
\caption{\footnotesize The graph $G_{n,r}$.}
\label{fig-1}
\end{figure}
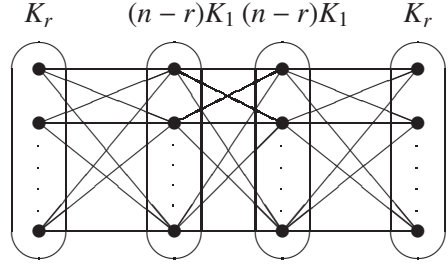

For the complete characterization of $\mathcal{G}(n,n-3)$, it remains to determine the graphs of $\mathcal{G}_1(n,n-3)$ and $\mathcal{G}_2(n,n-3)$.
At first, we introduce a tool which will be used frequently.

\begin{lem}\label{3-lem-1} Let $A$ be a real symmetric matrix of order $n\geq6$ having an eigenvalue $\alpha\neq0$ with $m(\alpha)=n-m$, where $1\leq m\leq n-2$. Let $M$ be a principal submatrix of $A$ of order $m+2$.
Then $\alpha$ is also an eigenvalue of $M$ with $m(\alpha)\ge2$, and there exists eigenvector $z=(z_1,z_2,\ldots,z_{m+2})^T$ of $M$ with respect to $\alpha$ such that $z_k=0$ for $1\le k\le m+2$.
Furthermore, $\sum_{i=1}^{m+2}z_i=0$ if $A=L(G)$ for a graph $G$.
\end{lem}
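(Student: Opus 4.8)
The plan is to extract everything from Godsil's interlacing theorem (Theorem~\ref{2-thm-1}). After a simultaneous permutation of the rows and columns of $A$—which changes neither its spectrum nor the hypotheses—I may assume that $M$ is the leading principal submatrix of $A$, so that $M=Q^{T}AQ$ with $Q=(I_{m+2}\mid O)^{T}$ of size $n\times(m+2)$. List the eigenvalues of $A$ as $\lambda_{1}\ge\cdots\ge\lambda_{n}$. Since $m(\alpha)=n-m$, there is an integer $p$ with $0\le p\le m$ for which $\lambda_{p+1}=\cdots=\lambda_{p+n-m}=\alpha$, while the $p$ eigenvalues preceding this block are $>\alpha$ and the $m-p$ eigenvalues following it are $<\alpha$. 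The assumption $1\le m\le n-2$ guarantees $m+2\le n$ and that all interlacing indices used below lie in $[1,n]$.

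Let $\mu_{1}\ge\cdots\ge\mu_{m+2}$ be the eigenvalues of $M$. Theorem~\ref{2-thm-1}, applied with its ``$m$'' equal to $m+2$, gives $\lambda_{i}\ge\mu_{i}\ge\lambda_{n-m-2+i}$ for $i=1,\dots,m+2$. If $i\ge p+1$ then $\mu_{i}\le\lambda_{i}\le\lambda_{p+1}=\alpha$; if $i\le p+2$ then $n-m-2+i\le n-m+p$, so $\mu_{i}\ge\lambda_{n-m-2+i}\ge\lambda_{n-m+p}=\alpha$. Since $0\le p\le m$, both $p+1$ and $p+2$ lie in $[1,m+2]$, and for $i\in\{p+1,p+2\}$ the two inequalities together force $\mu_{i}=\alpha$. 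Hence $\alpha$ is an eigenvalue of $M$ with multiplicity at least $2$.

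For the eigenvector, observe that $\mu_{p+1}=\lambda_{p+1}=\alpha$, so the ``furthermore'' part of Theorem~\ref{2-thm-1} supplies a (nonzero) $\alpha$-eigenvector $z$ of $M$ with the property that $Qz$ is an $\alpha$-eigenvector of $A$; by the shape of $Q$, the vector $Qz$ is just $z$ padded with $n-m-2$ zeros, i.e.\ the $\alpha$-eigenvector of $A$ whose coordinates outside the index set of $M$ vanish. Finally, if $A=L(G)$ then $L(G)\mathbf{1}_{n}=0$, since every row of a Laplacian matrix sums to zero; as $\alpha\neq0$, the eigenvector $Qz$ is orthogonal to $\mathbf{1}_{n}$, which reads $\sum_{i=1}^{m+2}z_{i}=\langle Qz,\mathbf{1}_{n}\rangle=0$.

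The only step that needs real attention is the second one: correctly locating the block of $\alpha$'s inside the sorted spectrum of $A$ via the index $p$, and checking that the interlacing bounds pin both $\mu_{p+1}$ and $\mu_{p+2}$ to $\alpha$ simultaneously—in particular in the extreme cases $p=0$ and $p=m$, where $\alpha$ is the largest, respectively the smallest, eigenvalue of $A$. Everything else is a direct application of Theorem~\ref{2-thm-1} together with the orthogonality of eigenvectors for distinct eigenvalues of a symmetric matrix.
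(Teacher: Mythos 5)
Your interlacing argument for the multiplicity claim is correct and is essentially the paper's own argument, carried out a bit more carefully: the paper picks the block $\lambda_i=\cdots=\lambda_{i+n-m-1}=\alpha$ and pins $\mu_i$ and $\mu_{i+1}$ to $\alpha$, exactly as you pin $\mu_{p+1}$ and $\mu_{p+2}$; your explicit bookkeeping with the index $p$ and the check of the extreme cases $p=0,\,p=m$ is a welcome improvement in rigor. The derivation of $\sum_i z_i=0$ from $\alpha\neq0$ and $L(G)\mathbf{1}_n=0$ is also the paper's argument.

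However, there is a genuine gap: you never prove the clause about the eigenvector with a prescribed vanishing coordinate. The statement, as it is actually used later (in Lemma~\ref{3-lem-3}), means: \emph{for each fixed} $k\in\{1,\dots,m+2\}$ there is a nonzero $\alpha$-eigenvector $z$ of $M$ with $z_k=0$ (and $\sum_i z_i=0$). Your proof produces only the single eigenvector $z$ handed to you by the ``furthermore'' part of Theorem~\ref{2-thm-1}, with no control over any of its coordinates, and your sum-zero conclusion is established only for that particular vector. The missing step is the one the paper supplies: since $m(\alpha)\ge 2$ for $M$, take two linearly independent $\alpha$-eigenvectors $x,y$ of $M$ and replace $z$ by the nonzero combination $y_kx-x_ky$, whose $k$-th coordinate vanishes. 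To then conclude $\sum_i z_i=0$ for \emph{this} $z$, you additionally need that $Qz$ is an $\alpha$-eigenvector of $A$ (not just $Qz'$ for the one vector $z'$ from the theorem); the cleanest route is to note that the $\alpha$-eigenspace of $A$ intersects the coordinate subspace supported on the rows of $M$ in dimension at least $(n-m)-(n-m-2)=2$, and to choose $x,y$ as restrictions of vectors in that intersection, so that every combination of them satisfies both $Mz=\alpha z$ and $(Qz)^{T}\mathbf{1}_n=\sum_i z_i=0$. Without some version of this step the lemma's main conclusion is unproved.
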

\begin{proof} Let $\lambda_1\geq\lambda_2\geq\cdots\geq\lambda_n$ be the eigenvalues of $A$ and $\mu_1\ge\mu_2\ge\cdots\ge\mu_{m+2}$ the eigenvalues of $M$.
By our assumption, there exists some $1\le i\le n$ such that $\lambda_i=\cdots=\lambda_{i+n-m-1}=\alpha\neq0$.
By Theorem \ref{2-thm-1}, we get that $\alpha=\lambda_{i+n-m-2}\le\mu_i\le\lambda_i=\alpha$
and $\alpha=\lambda_{i+n-m-1}\le\mu_{i+1}\le\lambda_{i+1}=\alpha$.
Therefore, we have $\mu_i=\mu_{i+1}=\alpha$.
Suppose that $x=(x_1,\ldots,x_{m+2})^T$ and $y=(y_1,\ldots,y_{m+2})^T$ are two independent eigenvectors of $M$ with respect to $\alpha$.
For each fixed integer $1\le k\le m+2$, by linear combination of $x$ and $y$, we get the eigenvector $z=(z_1,\ldots,z_{m+2})^T$ satisfying $z_k=0$.
Furthermore, by Theorem \ref{2-thm-1}, we know that $z*=Qz$ is an eigenvector of $A$ with respect to $\alpha$, where $Q=\begin{pmatrix}I_{m+2}\mid O\end{pmatrix}^T$. Note that the all-ones vector $j$ is an eigenvector of $L(G)$ with respect to $0$, then we have ${z*}^Tj=\sum_{i=1}^{m+2}z_i=0$.
\end{proof}

\begin{lem}\label{3-lem-2} Let $G$ be a connected graph on $n\geq 2m$ vertices with $m(\alpha)=n-m$, where $\alpha$ is a L-eigenvalue of $G$. Then $\alpha$ is integral.
\end{lem}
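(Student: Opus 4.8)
The plan is to deduce integrality from the arithmetic of the Laplacian characteristic polynomial, rather than from any eigenvector manipulation or from Godsil's interlacing (Theorem~\ref{2-thm-1}). Write $\phi(x)=\det(xI_n-L(G))$; since $L(G)$ is an integer matrix, $\phi$ is monic of degree $n$ with integer coefficients, so every Laplacian eigenvalue of $G$, in particular $\alpha$, is an algebraic integer. Let $g(x)\in\mathbb{Q}[x]$ be the minimal polynomial of $\alpha$: it is monic and irreducible. It suffices to prove $\deg g=1$, because a rational algebraic integer is an ordinary integer.

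The first step is to pin down the exact power of $g$ dividing $\phi$. Factor $\phi=g^{k}h$ in the UFD $\mathbb{Q}[x]$ with $g\nmid h$. Since $g$ is the minimal polynomial of $\alpha$, we have $h(\alpha)\neq0$; and since $g$ is irreducible over a field of characteristic $0$ it is separable, so $\alpha$ is a simple root of $g$. Hence the multiplicity of $\alpha$ as a root of $\phi$ is exactly $k$, and by hypothesis this multiplicity is $m(\alpha)=n-m$, so $k=n-m$. Comparing degrees gives
\[(\deg g)(n-m)\ \le\ \deg\bigl(g^{\,n-m}\bigr)\ \le\ \deg\phi\ =\ n .\]

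Now suppose, for contradiction, that $\deg g\ge 2$. The displayed inequality then yields $2(n-m)\le n$, i.e.\ $n\le 2m$; combined with the hypothesis $n\ge 2m$ this forces $n=2m$, and moreover all the inequalities above must be equalities, so $\deg g=2$ and $\phi=g^{\,n-m}=g^{\,m}$. But $L(G)$ is singular (its rows sum to zero), so $0$ is a root of $\phi$, hence of $g$; as $g$ is monic and irreducible in $\mathbb{Q}[x]$ this forces $g(x)=x$, contradicting $\deg g=2$. Therefore $\deg g=1$, so $\alpha\in\mathbb{Q}$, and being an algebraic integer it lies in $\mathbb{Z}$, as claimed.

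The argument is essentially self-contained once two routine facts are invoked: that the characteristic polynomial of an integer matrix is monic over $\mathbb{Z}$ (so eigenvalues are algebraic integers), and that the exponent of an irreducible factor of $\phi$ equals the root multiplicity of each of its roots. The only step that genuinely exploits the hypothesis $n\ge 2m$ is the boundary analysis $n=2m$, which collapses precisely because $0$ is always a Laplacian eigenvalue; I do not anticipate any real obstacle there.
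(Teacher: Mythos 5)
Your proof is correct and follows essentially the same route as the paper: both factor the monic integral characteristic polynomial by the minimal polynomial of $\alpha$, observe that its $(n-m)$-th power divides the characteristic polynomial, and derive a contradiction from the degree count when the minimal polynomial has degree at least $2$. The only (harmless) difference is at the boundary: the paper invokes connectivity ($m(0)=1$, so the conjugate root $\beta\neq0$ forces $2(n-m)\le n-1$), whereas you first get $2(n-m)\le n$ and then use only the singularity of $L(G)$ to exclude the equality case $n=2m$.
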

\begin{proof}
Let $f(x)$ be the characteristic polynomial of $L(G)$. As $L(G)$ only contains integral entries, we obtain that $f(x)$ is a monic polynomial with integral coefficients. Let $p(x)$ be the minimal polynomial of $\alpha$, then $p(x)\in Z[x]$ is irreducible in $Q[x]$ and $(p(x))^{n-m}|f(x)$. We assume that $p(x)$ is a polynomial of degree at least $2$. Therefore, $p(x)$ has another root $\beta\ne0$, which is also a Laplacian eigenvalue of $G$ with multiplicity $n-m$. It follows that $2(n-m)\le n-1$, which implies $n\leq 2m-1$, a contradiction. Thus, we have $p(x)=x-\alpha$ and the result follows.
\end{proof}

\begin{lem}\label{3-lem-3} Let $G\in\mathcal{G}(n,n-3)$ with $n\geq6$, then none of $J_1(=C_5)$, $J_2$ or $J_3$ (shown in Fig. \ref{fig-2}) can be an induced subgraph of $G$.
\end{lem}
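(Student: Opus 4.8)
The plan is to argue by contradiction, combining the interlacing tool of Lemma~\ref{3-lem-1} with the integrality result of Lemma~\ref{3-lem-2}. Let $\alpha$ be the Laplacian eigenvalue of $G$ with $m(\alpha)=n-3$; since $G$ is connected, $m(0)=1$, so $\alpha\neq 0$, and because $n\ge 6=2\cdot 3$ Lemma~\ref{3-lem-2} applies and shows that $\alpha$ is a positive integer. Suppose now that some $J\in\{J_1,J_2,J_3\}$ occurs as an induced subgraph of $G$; write $S=\{v_1,\dots,v_5\}$ for its vertex set and $M:=L(G)[S]$ for the corresponding principal submatrix of $L(G)$, which has order $m+2=5$. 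Then $M=L(J)+\operatorname{diag}(r_1,\dots,r_5)$, where $r_i=d_G(v_i)-d_J(v_i)\ge 0$ counts the neighbours of $v_i$ outside $S$, and Lemma~\ref{3-lem-1} provides, for each $k\in\{1,\dots,5\}$, a nonzero $\alpha$-eigenvector $z^{(k)}$ of $M$ with $z^{(k)}_k=0$ and $\sum_{i=1}^{5}z^{(k)}_i=0$; in particular $\alpha$ is an eigenvalue of $M$ of multiplicity at least $2$.

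I would treat $J=J_1=C_5$ first, with the vertices labelled cyclically. Fixing $k$ and abbreviating $z=z^{(k)}$, the row of $Mz=\alpha z$ at $v_k$ reads $(r_k+2-\alpha)z_k=z_{k-1}+z_{k+1}$, so $z_k=0$ forces the two neighbours of $v_k$ to satisfy $z_{k-1}=-z_{k+1}$, and then the condition $\sum_i z_i=0$ forces the two non-neighbours to satisfy $z_{k-2}=-z_{k+2}$. Substituting these relations into the remaining four equations, and setting aside the degenerate sub-cases in which enough coordinates vanish to force $z=0$, one is left with the equalities $r_{k-1}=r_{k+1}$ and $r_{k-2}=r_{k+2}$. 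Letting $k$ run over all five vertices propagates these equalities around the cycle and yields $r_1=\cdots=r_5=:r$, so $M=L(C_5)+rI$, whose spectrum is $\{\,r,\ r+\tfrac{5-\sqrt5}{2}\ (\times 2),\ r+\tfrac{5+\sqrt5}{2}\ (\times 2)\,\}$. Hence the only eigenvalue of $M$ of multiplicity $\ge 2$ is irrational, contradicting the integrality of $\alpha$; this rules out $J_1$.

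For $J_2$ and $J_3$ I would follow exactly the same recipe: set up $M=L(J_i)+\operatorname{diag}(r_1,\dots,r_5)$, impose $Mz^{(k)}=\alpha z^{(k)}$ together with $z^{(k)}_k=0$ and $\sum_i z^{(k)}_i=0$, and let $k$ range over $V(J_i)$. In each case the accumulated linear constraints should either be inconsistent for every choice of the nonnegative integers $r_i$, or force enough equalities among the $r_i$ that $M$ collapses to $L(J_i)$ plus a constant on each automorphism-orbit of $J_i$; one then checks that the resulting repeated eigenvalue $\alpha$ is irrational (contradicting Lemma~\ref{3-lem-2}) or that the reduced $\alpha$-eigenspace contains no vector that both vanishes at a prescribed coordinate and sums to zero (contradicting Lemma~\ref{3-lem-1}). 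I expect the bookkeeping for $J_2$ and $J_3$, rather than any single clever step, to be the main obstacle: one must track which eigenvector coordinates may vanish, keep the witnesses $z^{(k)}$ nonzero in every sub-case, and, in those sub-cases where $L(J_i)$ happens to have an integral spectrum so the irrationality shortcut is unavailable, fall back on the subtler incompatibility between the ``prescribed zero coordinate'' and ``zero sum'' conditions and the actual structure of the $\alpha$-eigenspace of the reduced matrix.
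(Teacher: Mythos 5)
Your handling of $J_1=C_5$ is essentially correct and takes a genuinely different (and arguably tidier) route than the paper's. You use all five eigenvectors $z^{(k)}$ to extract, from the rows at the two neighbours and the two non-neighbours of $v_k$, the relations $d_{k-1}=d_{k+1}$ and $d_{k-2}=d_{k+2}$ (indices mod $5$), which need no integrality at all; propagating over $k$ forces $M=L(C_5)+rI$, whose only repeated eigenvalues are $r+\frac{5\pm\sqrt5}{2}$, contradicting Lemma \ref{3-lem-2}. The paper instead uses just the two eigenvectors vanishing at $v_1$ and at $v_2$: integrality of $\alpha$ pins the free parameter of the first to $\pm1$ and gives $d_4=d_5-1$, while the second gives $d_4=d_5$. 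Both work; your version buys symmetry at the cost of running the degenerate-coordinate check for all five $k$ (which does go through: any further vanishing coordinate cascades to $z=0$).

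The genuine gap is in $J_2$ and $J_3$: for these you give a search recipe rather than a proof, and the endgame you anticipate (forcing the $r_i$ onto automorphism orbits and then invoking irrationality, or some unspecified ``subtler incompatibility'') is not how these cases actually close. No degree bookkeeping, no integrality, and no spectral computation are needed. The missing idea is that a \emph{single} well-chosen $k$ already kills each graph: for $J_2$ take $z$ vanishing at $u$, so the row at $u$ gives $z_1+z_2+z_4=0$, which together with $\sum_i z_i=0$ forces $z_3=0$; the row at $v_3$ then forces $z_2+z_4=0$, hence $z_1=0$; the row at $v_1$ forces $z_2=0$, hence $z_4=0$ and $z=0$, contradicting that $z$ is an eigenvector. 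For $J_3$ the same cascade starts from $z$ vanishing at $v_3$. Your proposal never identifies these starting coordinates or verifies that the cascade terminates in $z=0$, so two of the three cases remain unproved; moreover, since $L(J_2)$ and $L(J_3)$ are not circulant-like, your ``collapse to a constant on each orbit, then check irrationality'' fallback is not guaranteed to produce a contradiction even if carried out.
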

\begin{proof} By way of contradiction. Suppose that $G$ has $J_i$ as an induced subgraph and
$N_i$ the principal submatrix of $L(G)$ corresponding to $J_i$, for $i=1,2,3$.
Let $\alpha$ be the Laplacian eigenvalue of $G$ with multiplicity $n-3$.
Then, by Lemma \ref{3-lem-1}, $\alpha$ is an eigenvalue of $N_i$ with $m(\alpha)\geq2$,
and there exists an eigenvector $z$ of $N_i$ with respect to $\alpha$ such that $z_k=0$ for $k\in\{1,\cdots,5\}$, and $\sum_{i=1}^5z_i=0$.

First, suppose that $J_1=C_5$ is an induced subgraph of $G$ and $N_1$ is given by 
\[N_1=\left(\begin{array}{ccccc}d_1&-1&0&0&-1\\-1&d_2&-1&0&0\\0&-1&d_3&-1&0\\0&0&-1&d_4&-1\\-1&0&0&-1&d_5\end{array}\right)\begin{array}{l}v_1\\v_2\\v_3\\v_4\\u\end{array}.\]
By Lemma \ref{3-lem-1}, there exists an eigenvector $x=(0,x_2,x_3,x_4,x_5)^T$ satisfying $x_2+x_3+x_4+x_5=0$ such that $N_1x=\alpha x$.
From the first entry of $N_1x=\alpha x$, we have $x_2+x_5=0$. Therefore, we have $x_3+x_4=0$. If one of $x_2$ and $x_5$ equals zero, then $x_2=x_5=0$. Considering the second entry of both sides of $N_1x=\alpha x$, we get that $x_3=0$, and so $x_4=0$. It leads to that $x=0$, a contradiction. If one of $x_3$ and $x_4$ equals zero, then $x_3=x_4=0$, consider the third entry of both sides of $N_1x=\alpha x$ and we get that $x_2=0$, and so $x_5=0$. It leads to that $x=0$, a contradiction.
Thus $x_2,x_3,x_4,x_5\ne 0$. Without loss of generality, we may assume that $x=(0,1,a,-a,-1)^T$.
Consider the fourth and the fifth entries of both sides of $N_1x=\alpha x$, we have
\[d_5-a=\alpha=d_4+1-\frac{1}{a}.\]
Since $m(\alpha)=n-3$ and $n\geq6$, we get that $\alpha$ is integral by Lemma \ref{3-lem-2}. Then $a$ and $\frac{1}{a}$ are both integral.
Thus, we have $a=\pm1$, and so $\alpha=d_5-1=d_4$ when $a=1$ or $\alpha=d_5+1=d_4+2$ when $a=-1$. It follows that
\begin{equation}\label{eq-5}d_4=d_5-1.\end{equation}
On the other hand, by Lemma\ref{3-lem-1}, there also exists an eigenvector $y=(y_1,0,y_3,y_4,y_5)^T$ satisfying $y_1+y_3+y_4+y_5=0$ such that
$N_1y=\alpha y$. From the second entry of $N_1y=\alpha y$, we have $y_1+y_3=0$. Therefore, we have $y_4+y_5=0$. If one of $y_1$ and $y_3$ equals zero, then $y_1=y_3=0$. Consider the first entry of both sides of $N_1y=\alpha y$, we get that $y_5=0$, and then $y_4=0$.
It leads to that $y=0$, a contradiction.
If one of $y_4$ and $y_5$ equals zero, then $y_4=y_5=0$.
Consider the fourth entry of both sides of $N_1y=\alpha y$, we get that $y_3=0$, and then $y_1=0$.
It leads to that $y=0$, a contradiction. Thus, $y_1,y_3,y_4,y_5\ne 0$. Without loss of generality, we may assume that $y=(b,0,-b,1,-1)^T$.
Consider the fourth and the fifth entries of both sides of $N_1y=\alpha y$, we have
\[d_4+b+1=\alpha=d_5+b+1.\]
It follows that $d_4=d_5$, which contradicts to (\ref{eq-5}).

Second, assume that $J_2$ is an induced subgraph of $G$ and $N_2$ is given by
\[N_2=\left(\begin{array}{ccccc}d_1&-1&0&0&-1\\-1&d_2&-1&0&-1\\0&-1&d_3&-1&0\\0&0&-1&d_4&-1\\-1&-1&0&-1&d_5\end{array}\right)
\begin{array}{l}v_1\\v_2\\v_3\\v_4\\u\end{array}.\]
By Lemma \ref{3-lem-1}, there exists an eigenvector $x=(x_1,x_2,x_3,x_4,0)^T$ satisfying $x_1+x_2+x_3+x_4=0$ such that $N_2x=\alpha x$.
From the fifth entry of $N_2x=\alpha x$, we have $x_1+x_2+x_4=0$, which implies that $x_3=0$.
Then from the third entry of $N_2x=\alpha x$, we get $x_2+x_4=0$, which implies that $x_1=0$,
and then from the first entry we have $x_2=0$. Thus, $x_4=0$ and hence $x=0$, a contradiction.

Finally, assume that $J_3$ is an induced subgraph of $G$ and $N_3$ is given by
\[N_3=\left(\begin{array}{ccccc}d_1&-1&0&0&-1\\-1&d_2&-1&0&-1\\0&-1&d_3&-1&-1\\0&0&-1&d_4&-1\\-1&-1&-1&-1&d_5\end{array}\right)
\begin{array}{l}v_1\\v_2\\v_3\\v_4\\u\end{array}.\]
By Lemma \ref{3-lem-1}, there exists an eigenvector $x=(x_1,x_2,0,x_4,x_5)^T$ satisfying $x_1+x_2+x_4+x_5=0$ such that $N_3x=\alpha x$.
Consider the third, the first and the fourth entries of both sides of $N_3x=\alpha x$ succesively, we get that $x=0$, a contradiction.
\end{proof}

\begin{figure}[htbp]
\begin{center}
\unitlength 3.6mm 
\linethickness{0.4pt}
\ifx\plotpoint\undefined\newsavebox{\plotpoint}\fi 
\begin{picture}(24,9)(0,0)
\thicklines
\put(2,8){\line(1,0){4}}
\multiput(6,8)(.0333333,-.1){30}{\line(0,-1){.1}}
\multiput(7,5)(-.03370787,-.03370787){89}{\line(0,-1){.03370787}}
\multiput(4,2)(-.03370787,.03370787){89}{\line(0,1){.03370787}}
\multiput(1,5)(.0333333,.1){30}{\line(0,1){.1}}
\multiput(9,5)(.0333333,.1){30}{\line(0,1){.1}}
\put(10,8){\line(1,0){4}}
\multiput(14,8)(.0333333,-.1){30}{\line(0,-1){.1}}
\multiput(15,5)(-.03370787,-.03370787){89}{\line(0,-1){.03370787}}
\multiput(12,2)(-.03370787,.03370787){89}{\line(0,1){.03370787}}
\multiput(17,5)(.0333333,.1){30}{\line(0,1){.1}}
\put(18,8){\line(1,0){4}}
\multiput(22,8)(.0333333,-.1){30}{\line(0,-1){.1}}
\multiput(23,5)(-.03370787,-.03370787){89}{\line(0,-1){.03370787}}
\multiput(20,2)(-.03370787,.03370787){89}{\line(0,1){.03370787}}
\put(12,2){\line(-1,3){2}}
\put(18,8){\line(1,-3){2}}
\put(20,2){\line(1,3){2}}
\put(2,8){\circle*{.5}}
\put(6,8){\circle*{.5}}
\put(10,8){\circle*{.5}}
\put(14,8){\circle*{.5}}
\put(18,8){\circle*{.5}}
\put(22,8){\circle*{.5}}
\put(1,5){\circle*{.5}}
\put(7,5){\circle*{.5}}
\put(9,5){\circle*{.5}}
\put(15,5){\circle*{.5}}
\put(17,5){\circle*{.5}}
\put(23,5){\circle*{.5}}
\put(20,2){\circle*{.5}}
\put(12,2){\circle*{.5}}
\put(4,2){\circle*{.5}}
\put(0.3,5){\makebox(0,0)[cc]{\footnotesize$v_1$}}
\put(2,8.7){\makebox(0,0)[cc]{\footnotesize$v_2$}}
\put(6,8.7){\makebox(0,0)[cc]{\footnotesize$v_3$}}
\put(7.7,5){\makebox(0,0)[cc]{\footnotesize$v_4$}}
\put(10,5){\makebox(0,0)[cc]{\footnotesize$v_1$}}
\put(10,8.7){\makebox(0,0)[cc]{\footnotesize$v_2$}}
\put(14,8.7){\makebox(0,0)[cc]{\footnotesize$v_3$}}
\put(15.7,5){\makebox(0,0)[cc]{\footnotesize$v_4$}}
\put(18,5){\makebox(0,0)[cc]{\footnotesize$v_1$}}
\put(18,8.7){\makebox(0,0)[cc]{\footnotesize$v_2$}}
\put(22,8.7){\makebox(0,0)[cc]{\footnotesize$v_3$}}
\put(23.7,5){\makebox(0,0)[cc]{\footnotesize$v_4$}}
\put(4.6,2){\makebox(0,0)[cc]{\footnotesize$u$}}
\put(12.6,2){\makebox(0,0)[cc]{\footnotesize$u$}}
\put(20.6,2){\makebox(0,0)[cc]{\footnotesize$u$}}
\put(4,0.5){\makebox(0,0)[cc]{\small$J_1=C_5$}}
\put(12,0.5){\makebox(0,0)[cc]{\small$J_2$}}
\put(20,0.5){\makebox(0,0)[cc]{\small$J_3$}}
\end{picture}
\caption{The graphs in Lemma \ref{3-lem-3}}
\label{fig-2}
\end{center}
\end{figure}
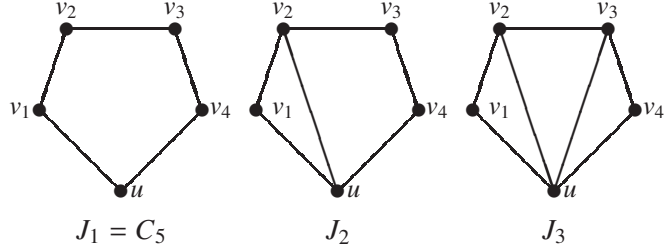

\begin{lem}\label{3-lem-4} Let $G\in\mathcal{G}(n,n-3)$ with $n\ge 6$.
If $G\neq G_{n,r}$ (shown in Fig. \ref{fig-1}), then $\bar{G}$ is disconnected.
\end{lem}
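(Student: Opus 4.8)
The plan is to prove the contrapositive: I assume $\bar{G}$ is connected and deduce $G=G_{n,r}$. Since $G$ is connected on $n\ge 6\ge 2$ vertices and $\bar{G}$ is connected as well, $G$ cannot be a cograph, since otherwise Lemma~\ref{2-lem-3}(2), applied to $G$ regarded as a connected induced subgraph of itself, would force $\bar{G}$ to be disconnected. Therefore $G$ contains an induced $P_4$; fix one on vertices $v_1,v_2,v_3,v_4$ with edge set exactly $\{v_1v_2,v_2v_3,v_3v_4\}$.

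The central step is to show that \emph{no vertex of $G$ is adjacent to both $v_1$ and $v_4$}. Suppose some $u\notin\{v_1,v_2,v_3,v_4\}$ has $u\sim v_1$ and $u\sim v_4$, and consider the subgraph induced on $\{v_1,v_2,v_3,v_4,u\}$, distinguishing cases by whether $u\sim v_2$ and whether $u\sim v_3$: it is the $5$-cycle $J_1$ when $u$ is adjacent to neither, the house $J_2$ when $u$ is adjacent to exactly one of them, and the gem $J_3$ when $u$ is adjacent to both. All three are forbidden as induced subgraphs by Lemma~\ref{3-lem-3}, a contradiction. Since also $v_1\not\sim v_4$ (the $P_4$ is induced), the vertices $v_1$ and $v_4$ are non-adjacent and have no common neighbour, so $d_G(v_1,v_4)\ge 3$ and hence $d(G)\ge 3$.

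On the other hand, every graph in $\mathcal{G}(n,n-3)$ has three or four distinct Laplacian eigenvalues, so $d(G)\le 3$ by Lemma~\ref{2-lem-2}; thus $d(G)=3$, and then Lemma~\ref{2-lem-2} rules out three distinct eigenvalues, so $G$ has exactly four. As one eigenvalue has multiplicity $n-3$ and $0$ is simple, the other two eigenvalues are simple, whence $G\in\mathcal{G}_3(n,n-3)\cup\mathcal{G}_4(n,n-3)\cup\mathcal{G}_5(n,n-3)$. By Theorem~\ref{3-thm-1} these classes consist of explicitly known graphs, and every one of them except $G_{n,r}$ is a join of two graphs (for instance $K_{n-1}+e=K_1\nabla(K_{n-2}\cup K_1)$, $K_{1,n-1}+e=K_1\nabla(\overline{K}_{n-3}\cup K_2)$, and $K_{\frac{n}{2},\frac{n}{2}}+e$ has the disconnected complement $(K_{\frac{n}{2}}-e)\cup K_{\frac{n}{2}}$), hence has a disconnected complement. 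Since $\bar{G}$ is connected, we conclude $G=G_{n,r}$, which is the contrapositive of the claim.

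I expect the middle paragraph to be the only delicate point: one must correctly identify the few five-vertex configurations produced by $u$ with $J_1$, $J_2$, $J_3$ so that Lemma~\ref{3-lem-3} applies. The diameter/eigenvalue-count argument and the verification that the graphs of Theorem~\ref{3-thm-1} other than $G_{n,r}$ decompose as joins are routine bookkeeping.
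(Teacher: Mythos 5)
Your proof is correct and follows essentially the same route as the paper: the forbidden induced subgraphs $J_1,J_2,J_3$ from Lemma \ref{3-lem-3} rule out a common neighbour of the endpoints of an induced $P_4$, the diameter bound of Lemma \ref{2-lem-2} then forces four distinct eigenvalues, and Theorem \ref{3-thm-1} finishes the job. If anything, your version is slightly more careful than the paper's, which in the four-eigenvalue case cites only Theorem \ref{3-thm-1}(ii) (the class $\mathcal{G}_4$), whereas you correctly allow $G$ to lie in $\mathcal{G}_3$ or $\mathcal{G}_5$ as well and check those graphs too.
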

\begin{proof} By Lemma \ref{2-lem-3}, it suffices to show that $G$ contains no induced $P_4$. Assume by contradiction that $G$ contains an induced $P_4=v_1v_2v_3v_4$. 
If $G$ has three distinct Laplacian eigenvalues, then $d(G)=2$ by Lemma \ref{2-lem-2}. Therefore, there exists a vertex $u\in V(G)$ such that $u\sim v_1,v_4$, since otherwise $d(v_1,v_4)\ge3$. It follows that at least one of $J_1$, $J_2$ and $J_3$ will be an induced subgraph of $G$, which contradicts to Lemma \ref{3-lem-3}.
If $G$ has four distinct Laplacian eigenvalues, then $G\in\{K_1\nabla2K_{\frac{n-1}{2}},\overline{K}_{\frac{n}{3}}\nabla2K_{\frac{n}{3}},K_{n-1}+e,G_{n,r}\}$ by Theorem \ref{3-thm-1}(ii), from which we find that all graphs but $G_{n,r}$ have their complements disconnected.
\end{proof}

We now give the characterization of the graphs in $\mathcal{G}_1(n,n-3)$ and $\mathcal{G}_2(n,n-3)$.
\begin{thm}\label{3-thm-3} For an integer $n\geq6$, we have \\
(i) $\mathcal{G}_1(n,n-3)=\{3K_1\nabla K_{n-3},C_4\nabla K_{n-4}\}$ and the Laplacian spectra of them are given by
\begin{equation}\label{eq-7}
\left\{\begin{array}{l}
\emph{Spec}_{L}(3K_1\nabla K_{n-3})=\{[n]^{n-3},[n-3]^2,[0]^1\}\\
\emph{Spec}_{L}(C_4\nabla K_{n-4})=\{[n]^{n-3},[n-2]^2,[0]^1\}\\
\end{array}\right..
\end{equation}
(ii) $\mathcal{G}_2(n,n-3)=\{K_2\nabla(n-2)K_1,K_1\nabla K_{\frac{n-1}{2},\frac{n-1}{2}},K_{\frac{n}{3},\frac{n}{3},\frac{n}{3}}\}$ and
the Laplacian spectra of them are given by
\begin{equation}\label{eq-8}
\left\{\begin{array}{l}
\emph{Spec}_{L}(K_2\nabla(n-2)K_1)=\{[n]^2,[2]^{n-3},[0]^1\}\\
\emph{Spec}_{L}(K_1\nabla K_{\frac{n-1}{2},\frac{n-1}{2}})=\{[n]^2,[\frac{n+1}{2}]^{n-3},[0]^1\}\\
\emph{Spec}_{L}(K_{\frac{n}{3},\frac{n}{3},\frac{n}{3}})=\{[n]^2,[\frac{2n}{3}]^{n-3},[0]^1\}\\
\end{array}\right.
\end{equation}
\end{thm}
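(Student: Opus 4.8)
The plan is to classify both families by passing to the complement and invoking Lemma \ref{2-lem-1}(ii). First I would observe that every $G$ in $\mathcal{G}_1(n,n-3)$ or $\mathcal{G}_2(n,n-3)$ has \emph{exactly} three distinct Laplacian eigenvalues: the middle value $\beta$ is nonzero (otherwise $m(0)\ge 3$ and $G$ is disconnected) and $\alpha\ne\beta$ (otherwise $G$ has two distinct Laplacian eigenvalues, so $G=K_n$ by Lemma \ref{2-lem-1}(ii), forcing $m(\alpha)=n-1\ne n-3$). In particular $G$ is not the graph $G_{n,r}$ of Figure \ref{fig-1}, which has four distinct Laplacian eigenvalues by \eqref{eq-2}; hence Lemma \ref{3-lem-4} applies and $\overline G$ is disconnected. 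Combined with Lemma \ref{2-lem-1}(iii) this gives $\mu_1(G)=n-\mu_{n-1}(\overline G)=n$, so the largest eigenvalue $\alpha$ equals $n$ in both classes.

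Next I would read off $\mathrm{Spec}_L(\overline G)$ via Lemma \ref{2-lem-1}(iii). For $G\in\mathcal{G}_1(n,n-3)$, from $\mathrm{Spec}_L(G)=\{[n]^{n-3},[\beta]^2,[0]\}$ one gets $\mathrm{Spec}_L(\overline G)=\{[n-\beta]^{2},[0]^{n-2}\}$; for $G\in\mathcal{G}_2(n,n-3)$, from $\mathrm{Spec}_L(G)=\{[n]^{2},[\beta]^{n-3},[0]\}$ one gets $\mathrm{Spec}_L(\overline G)=\{[n-\beta]^{n-3},[0]^{3}\}$. Since $\beta<\alpha=n$, the number $n-\beta$ is a genuine positive eigenvalue, so in both cases $\overline G$ has precisely two distinct Laplacian eigenvalues. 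By Lemma \ref{2-lem-1}(ii), $\overline G=aK_t\cup bK_1$ with $t\ge 2$, $a\ge 1$, $b\ge 0$ and $at+b=n$; here $L(\overline G)$ has eigenvalue $t$ with multiplicity $a(t-1)$ and eigenvalue $0$ with multiplicity $a+b$.

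Matching multiplicities now closes the argument. In case (i), $a(t-1)=2$ forces $(a,t)\in\{(1,3),(2,2)\}$, i.e.\ $\overline G\in\{K_3\cup(n-3)K_1,\ 2K_2\cup(n-4)K_1\}$; taking complements (using $\overline{X\cup Y}=\overline X\nabla \overline Y$ and $\overline{2K_2}=C_4$) yields $G\in\{3K_1\nabla K_{n-3},\ C_4\nabla K_{n-4}\}$. In case (ii), $a+b=3$ together with $a(t-1)=n-3$ forces $(a,b,t)\in\{(1,2,n-2),(2,1,\tfrac{n-1}{2}),(3,0,\tfrac{n}{3})\}$, the last two respectively requiring $n$ odd and $3\mid n$; the corresponding complements are $K_2\nabla(n-2)K_1$, $K_1\nabla K_{(n-1)/2,(n-1)/2}$ and $K_{n/3,n/3,n/3}$. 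For the reverse inclusion I would simply check, via Lemma \ref{2-lem-1}(iv) (or Lemma \ref{2-lem-1}(iii) applied to $\overline G$), that each listed graph has the Laplacian spectrum recorded in \eqref{eq-7} and \eqref{eq-8}. I do not expect a serious obstacle: the argument is essentially bookkeeping, and the only delicate points are that the appeal to Lemma \ref{3-lem-4} is precisely where the exceptional graph $G_{n,r}$ must first be excluded (which is why the ``exactly three distinct eigenvalues'' observation is required), and that one must keep track of the divisibility conditions governing the existence of the second and third graphs in part (ii).
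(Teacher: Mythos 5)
Your proposal is correct and follows essentially the same route as the paper: exclude $G_{n,r}$, apply Lemma \ref{3-lem-4} to get $\overline{G}$ disconnected and $\alpha=n$, pass to $\mathrm{Spec}_L(\overline{G})$ via Lemma \ref{2-lem-1}(iii), and identify $\overline{G}$ as a disjoint union of equal complete graphs and isolated vertices. Your explicit multiplicity bookkeeping with $\overline{G}=aK_t\cup bK_1$ merely fills in what the paper dismisses as ``routine to verify.''
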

\begin{proof} Let $G\in\mathcal{G}_1(n,n-3)\cup\mathcal{G}_2(n,n-3)$. Then $G$ has three distinct Laplacian eigenvalues, say $\alpha>\beta>0$, and so $G\neq G_{n,r}$. By Lemma \ref{3-lem-4}, we know that $\bar{G}$ is disconnected,
and so $\alpha=n$ from Lemma \ref{2-lem-1} (i) and (iii).

First suppose that $\alpha=n$ has multiplicity $n-3$, and so $G$ has Laplacian spectrum $\{n^{n-3},\beta^2,0\}$.
Then, by Lemma \ref{2-lem-1}, the L-spectrum of $\bar{G}$ is given by
\[\mathrm{Spec}_L(\bar{G})=\{[n-\beta]^2,[0]^{n-2}\},\]
which implies that $\bar{G}$ has $n-2$ connected components, say $\bar{G}=G_1\cup G_2\cup\cdots\cup G_{n-2}$.
It is easy to see that $\bar{G}=K_3\cup(n-3)K_1$ or $2K_{2}\cup(n-4)K_1$. Consequently, $G=3K_1\nabla K_{n-3}$ or $G=C_4\nabla K_{n-4}$.
Thus (i) holds.

Next suppose that $\beta$ has multiplicity $n-3$, and so $G$ has Laplacian spectra $\{n^2,\beta^{n-3},0\}$.
Similarly, by Lemma \ref{2-lem-1}, we have
\[Spec_L(\bar{G})=\{[n-\beta]^{n-3},[0]^3\},\]
which implies that $\bar{G}$ contains three connected components, say $\bar{G}=G_1\cup G_2\cup G_3$.
It is routine to verify that $\bar{G}=2K_1\cup K_{n-2}$, $K_1\cup 2K_{\frac{n-1}{2}}$ or $3K_{\frac{n}{3}}$. Hence $G=K_2\nabla (n-2)K_1$, $K_1\nabla K_{\frac{n-1}{2},\frac{n-1}{2}}$ or $K_{\frac{n}{3},\frac{n}{3},\frac{n}{3}}$.
Thus (ii) holds.

Note that all of the graphs we find consist of the join of two graphs, by Lemma \ref{2-lem-1} (iv),
we obtain their Laplacian spectra, which are shown in (\ref{eq-7}) and (\ref{eq-8}).
\end{proof}

\begin{remark}\label{re-1} By using the software SageMath, we get the graphs of $\mathcal{G}(n,n-3)$ with $n=4$ and $n=5$. That is,
\[\left\{\begin{array}{l}
\mathcal{G}(4,1)=\{P_4,K_{1,3}+e\}\\
\mathcal{G}(5,2)=\{C_5,K_1\nabla C_4,K_2\nabla3K_1\}\\
\end{array}\right.\]
\end{remark}

\begin{cor}\label{3-cor-1} Let $G\in\mathcal{G}(n,n-3)$ with $n\geq4$, then $G$ is determined by its Laplacian spectrum.
\end{cor}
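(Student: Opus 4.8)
The plan is to read the corollary off the classification just obtained. Combining Theorems~\ref{3-thm-1} and~\ref{3-thm-3} (for $n\ge 6$) with Remark~\ref{re-1} (for $n\in\{4,5\}$), the family $\mathcal{G}(n,n-3)$ is, for each fixed $n$, an explicit finite list of graphs, and the Laplacian spectrum of each of them is written out in (\ref{eq-1})--(\ref{eq-3}), (\ref{eq-7}), (\ref{eq-8}) and Remark~\ref{re-1}. Hence it suffices to prove two things: (a) any graph $H$ that is Laplacian cospectral with some $G\in\mathcal{G}(n,n-3)$ again lies in $\mathcal{G}(n,n-3)$, and (b) the graphs appearing in this list are pairwise non-cospectral.

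For (a): if $\mathrm{Spec}_L(H)=\mathrm{Spec}_L(G)$ with $G\in\mathcal{G}(n,n-3)$, then $H$ has exactly $n$ vertices (the spectrum has $n$ entries) and it has the same Laplacian eigenvalue as $G$ with multiplicity $n-3$; moreover the number of connected components of $H$ equals $m(0)$ computed in $H$, which equals $m(0)$ in $G$, i.e.\ $1$, by Lemma~\ref{2-lem-1}(i). So $H$ is connected, whence $H\in\mathcal{G}(n,n-3)$, and together with (b) this forces $H\cong G$.

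For (b), I would first dispose of $n\in\{4,5\}$ by a direct check against the two short lists in Remark~\ref{re-1}. For $n\ge 6$ the argument is a finite inspection organised by the ``shape'' of the spectrum. The subfamilies $\mathcal{G}_1(n,n-3)$ and $\mathcal{G}_2(n,n-3)$ have three distinct Laplacian eigenvalues, while $\mathcal{G}_3,\mathcal{G}_4,\mathcal{G}_5$ have four, so these two groups cannot meet. Inside each group, the position, in the decreasing ordering of the spectrum, of the eigenvalue carrying multiplicity $n-3\ (\ge 3)$ is different for every subfamily (largest versus second in $\mathcal{G}_1$ versus $\mathcal{G}_2$; largest, middle, smallest nonzero in $\mathcal{G}_3,\mathcal{G}_4,\mathcal{G}_5$), so no cross-subfamily coincidence occurs. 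Within a single subfamily one simply compares the remaining (small-multiplicity) eigenvalue values as given: in $\mathcal{G}_1$ the repeated eigenvalue is $n-3$ versus $n-2$; in $\mathcal{G}_2$ it is one of $2,\frac{n+1}{2},\frac{2n}{3}$, which are pairwise distinct for $n\ge6$; $\mathcal{G}_3$ contains a single graph; and the comparisons in $\mathcal{G}_4$ and $\mathcal{G}_5$ are read off from (\ref{eq-2}) and (\ref{eq-3}).

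The only comparison that is not pure inspection is the family $G_{n,r}$ inside $\mathcal{G}_4$, whose two small-multiplicity eigenvalues are $\tfrac12\bigl(3n-2r\pm\sqrt{n^2+4nr-4r^2}\bigr)$. Here I would note that their sum is $3n-2r$ and their product is $2(n-r)^2$, so distinct admissible $r$ already give distinct eigenvalue pairs, and that for the admissible range of $r$ the discriminant $n^2+4nr-4r^2$ is not a perfect square, so these eigenvalues are irrational; this separates $G_{n,r}$ both from the other members of $\mathcal{G}_4$ and from every graph in $\mathcal{G}_1,\mathcal{G}_2,\mathcal{G}_3,\mathcal{G}_5$, all of whose Laplacian eigenvalues are integers. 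I expect this short monotonicity/irrationality check, together with the finitely many integer comparisons, to be the only real labour; once the classification of the preceding theorems is in hand there is no conceptual obstacle.
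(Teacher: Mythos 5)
Your overall strategy is exactly the paper's (the paper's own proof is just: cospectrality forces $H\in\mathcal{G}(n,n-3)$, then compare the listed spectra pairwise), and your step (a), making the connectedness of $H$ explicit via Lemma~\ref{2-lem-1}(i), is a welcome clarification. However, one concrete claim in your step (b) is false: the discriminant $n^2+4nr-4r^2$ \emph{can} be a perfect square for admissible $r$. For instance, $(n,r)=(5,2)$ gives $25+40-16=49$, so $G_{5,2}$ (a graph on $10$ vertices) has the perfectly integral spectrum $\{[9]^1,[5]^7,[2]^1,[0]^1\}$; more generally $(n,r)=(5k,2k)$ gives discriminant $49k^2$. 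So the irrationality argument you lean on to separate $G_{n,r}$ from all the other families does not work.

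The conclusion is still correct, but it needs a different separating invariant. The cleanest one: every graph in the list other than $G_{n,r}$ is a join, hence has largest Laplacian eigenvalue equal to its number of vertices, whereas the largest Laplacian eigenvalue of $G_{n,r}$ is $\tfrac12\bigl(3n-2r+\sqrt{n^2+4nr-4r^2}\bigr)<2n$ for $r\ge1$ (equality would force $r=0$), since $\overline{G_{n,r}}$ is connected. This immediately rules out cospectrality between $G_{n,r}$ and any other graph in $\mathcal{G}(2n,2n-3)$; within the $G_{n,r}$ family your sum/product observation ($3n-2r$ and $2(n-r)^2$) does determine $r$. With that replacement, and your finitely many integer comparisons elsewhere, the proof closes.
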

\begin{proof} Let $H$ be a connected graph on $n$ vertices with $\emph{Spec}_L(H)=\emph{Spec}_L(G)$. We get that $H\in\mathcal{G}(n,n-3)$.
Then the result follows by pairwise comparing the Laplacian spectra of graphs in $\mathcal{G}(n,n-3)$.
\end{proof}

\end{document}